\newtheorem{proposition}{Proposition}
\newtheorem{theorem}{Theorem}
\newtheorem{corollary}{Corollary}
\newtheorem{definition}{Definition}
\newcommand{\set}[1]{\{#1\}}
\newcommand{\map}{\longrightarrow}
\newcommand{\transpose}{\kern1pt{}^t\kern-0.5pt}
\newcommand{\BB}{{\mathbb B}}
\newcommand{\CC}{{\mathbb C}}
\newcommand{\HH}{{\mathbb H}}
\newcommand{\PP}{{\mathbb P}}
\newcommand{\QQ}{{\mathbb Q}}
\newcommand{\RR}{{\mathbb R}}
\newcommand{\ZZ}{{\mathbb Z}}
\newcommand{\AAA}{{\mathcal A}}
\newcommand{\EEE}{{\mathcal E}}
\newcommand{\HHH}{{\mathcal H}}
\newcommand{\MMM}{{\mathcal M}}
\newcommand{\OOO}{{\mathcal O}}
\newcommand{\PPP}{{\mathcal P}}
\title{Cubic Surfaces with Special Periods}
\author{James Carlson and Domingo Toledo}
\date{April 9, 2011, revised October 2, 2011. Research partially supported by National Science 
Foundation Grant DMS-0600816;
 the first author also gratefully acknowledges the support of the Clay
 Mathematics Institute and of CIMAT.  \\ 
 } 
\begin{document}
\maketitle


\parskip=1pt
\tableofcontents

\parskip=5pt

\section{Introduction}

Consider a complex elliptic curve $E$ with equation 
\begin{equation}
  \label{equation:weierstrass}
  y^2 =  4x^3 - g_2x -g_3.
\end{equation}
The endomorphism ring of such a curve always contains the integer 
dilations $z \mapsto nz$.  An elliptic curve with a larger endomorphism 
ring is said to have  \emph{complex multiplication}.  
The additional endomorphisms are of the form $z 
\mapsto \lambda z$, and it is easy to see that $\lambda$ lies in a 
purely 
imaginary quadratic extension $K = 
\QQ(\sqrt{-d})$ of 
the rational numbers.  This number field can be 
identified with $End(E)\otimes \QQ$ and is called \emph{the CM field} of $E$.  Conversely, 
if $K$ is a purely imaginary quadratic extension of the rational numbers,
an elliptic curve with CM field $K$ can be constructed as $\CC/\OOO_K$,
where $\OOO_K$ is the ring of integers in $K$.

Now fix a symplectic homology basis $\set{\gamma_1, \gamma_2}$, and let
\[
    \lambda_i = \int_{\gamma_i} \frac{dx}{y}
\]
be the \emph{fundamental periods}. If we write $(\ref{equation:weierstrass})$ as
\[
   y^2 = 4(x - e_1)(x - e_2)(x - e_3),
\]
then fundamental periods are the elliptic integrals
\[
   \lambda_i = \int_{e_i}^{e_{i+1}} \frac{dx}{\sqrt{(x - e_1)(x - e_2)(x - e_3)}}, \ \text{for}\ i = 1,2.
\]
A theorem of Siegel (1932)
tells us that if the coefficients $g_i$ are algebraic numbers, then at least one of the periods is transcendental, and in 1934,
Schneider showed that the nonzero periods are always transcendental.   For a specific
example, consider the Fermat elliptic curve, with affine equation $y^2  = 4(x^3  - 1)$. 
Then with $e_1,e_2,e_3= 1,\omega,\omega^2$, in that order,
\[
   \lambda_2 =  \int_1^\infty \frac{dx}{\sqrt{x^3  - 1}} = \frac{1}{3}B(1/6,1/2)
      = \frac{\Gamma(1/3)^3}{2^{4/3}\pi}.
\]
See \cite[equation (10)]{Waldschmitt}.    Here, and throughout this paper, $\omega= e^{2\pi i/3}$, a primitive cube root of unity.

When $E$ has complex multiplication,  the \emph{period ratio}
\[
    \tau = \lambda_2/\lambda_1.
\]
lies in the CM field.  In this case, the transcendence degree of the field the periods, 
$\QQ(\lambda_1, \lambda_2)$, is one. It is conjectured that in the non-CM case, the field generated by
the periods is of transcendence degree two. 

Consider next a cubic surface $S$. Because the  Hodge structure on $H^2(S)$ is entirely of type $(1,1)$, there are no periods of interest.  However, there is an auxiliary Hodge structure of weight three associated to $S$ which does have interesting 
periods and which behaves in many ways like the Hodge 
structure of an elliptic curve. This is the Hodge structure of a three-sheeted cover $T$ of projective 3-space branched along $S$.
The period domain for such Hodge structures is $\BB^4$,  the 
unit ball in complex four-space, which is the same as the complex hyperbolic space $\CC H^4$.  In this paper we use the notation $\BB^4$ rather than $\CC H^4$.

  There is a natural period map
\begin{equation}
\label{eq:periodmap}
   \PPP: \MMM_{st} \map \Gamma\backslash\BB^4
\end{equation}
from the moduli space of stable 
cubic surfaces (those with at worst nodal singularities)
to the indicated quotient of the ball by an arithmetic group.  
It is an isomorphism \cite{ACT}.  The aim of this paper is to 
explore the relation between cubic surfaces and their periods.
The theory of elliptic curves serves as a model of what might
be possible to establish, at least in part.  In particular, we 
will investigate the rationality of period ratios.  While the 
results presented in this direction are modest, they point to 
several interesting questions.  We discuss them at the end of 
this paper.

We are indebted to Madhav Nori, who explained to us how
to use the different ideal to check whether the Abelian variety defined
by a number field is principally polarized.   We would also like to express our
thanks to the referee, whose comments greatly improved this paper.

\section{Statement of results}
\label{section:results}

We first  review the main results of \cite{ACT} and explain the mapping (\ref{eq:periodmap}).  We  summarize the results we need,  details can be found in \cite{ACT} and in \cite{Beau}.  

\subsection{Standard Notation}
\label{subsec:notation} 

The following notation will be used throughout this paper.  
If  $S\subset\PP^3$ is a cubic surface defined by a cubic equation $F(x) = 0$, let $T$ be the cubic threefold given by $\{y^3 = F(x)\}\subset\PP^4$,  where $x\in\CC^4$ and $y\in\CC$.   Let  $\omega$  denote the primitive cube root of unity $\omega = \exp( 2\pi i/3)$,  and let  $\sigma$ be  the  automorphism of  $T$ over $\PP^3$ defined by $\sigma(x,y) = (x,\omega y)$,   Finally, let $\theta = i\sqrt{3}$ be the square root of $-3$ situated on the positive imaginary axis in the complex plane.

\subsection{The Eisenstein structure of $T$}
\label{subsec;eisensteinstructure}

The middle cohomology  $H^3(T,\ZZ)$ is a free abelian group of rank $10$ with an automorphism $\sigma$ satisfying $\sigma^2 + \sigma + 1 =0$, hence it is a free module of rank $5$ over the ring $\EEE = \ZZ[\omega]$ of Eisenstein integers. 
This  
module carries a natural $\EEE$-valued hermitian  form $h$ of signature 
$(4,1)$ and determinant one given by  
\[
  -  2h(x,y) = \left<(\sigma - \sigma^{-1})x,y\right> + (\omega - 
\omega^{-1})
\left<x,y\right>,
\]
see (2.3.1) of \cite{ACT}, or, more simply, by the equivalent formula 
\[
h(x,y) = <x,\sigma y> - \omega <x,y>
\]
as in (2.1) of \cite{Beau}.  This form is standard:  $(H^3(T,\ZZ),\sigma,h)$ is isomorphic to $(\EEE^5,\omega,h_0)$ where $h_0(z,w) = -z_0\overline{w}_0 + z_1\overline{w}_1 + \cdots + z_4\overline{w}_4$.

\subsection{The Hodge structure of $T$}
\label{subsec:hodgestructure}

 Since $H^{3,0}(T) = 0$, 
the Hodge structure has the form $H^{2,1} \oplus  H^{1,2}$. 
The symmetry $\sigma$ is an automorphism of $H^3 = H^3(T,\CC)$ with 
eigenvalues $
\omega$
and $\bar\omega$.  Let $H^3 = H^3_\omega \oplus H^3_{\bar\omega}$ be the 
eigenspace 
decomposition.  The summands are in natural duality by the cup-product pairing and are interchanged by complex conjugation.  
Because $\sigma$ is holomorphic, its action on cohomology is compatible with the Hodge decomposition.  Thus one has  
complex Hodge 
structures
\[
    H^{2,1}_{\omega} \oplus H^{1,2}_{\omega}\ \text{on} \ H^3_{\omega}\ \ \ \text{and}\ \ \  H^{2,1}_{\bar\omega} \oplus H^{1,2}_{\bar\omega}\ \text{on} \ H^3_{\bar\omega},
\]
where the summands have dimensions $4,1,1,4$ in the order listed.   Equivalently, the same summands  give the eigenspace decompositions
\[
 H^{2,1} = H^{2,1}_\omega\oplus H^{2,1}_{\bar\omega}\ \ \text{and}\ \ H^{1,2} = H^{1,2}_\omega\oplus H^{1,2}_{\bar\omega}.
\]
The hermitian form
\begin{equation}
\label{eq:hermitian}
h'(\alpha,\beta) = - i \sqrt{3} <\alpha,\bar\beta>
\end{equation}
on $H^3(T,\CC)$ is positive definite on $H^{2,1}$, negative definite on $H^{1,2}$ and these two subspaces are $h'$-orthogonal.    There are complex linear isomorphisms
\begin{equation}
\label{eq:frame}
(H^3_\omega,h') \cong( ( H^3(T,\ZZ),\sigma)\otimes_\EEE \CC, h)\cong\EEE^{4,1}\otimes_\EEE\CC\cong\CC^{4,1}
\end{equation}
and
\begin{equation}
\label{eq:framing}
 (H^3_{\bar\omega},h') \cong( ( H^3(T,\ZZ),\sigma^{-1})\otimes_\EEE \CC,-\bar h)\cong\EEE^{1,4}\otimes_\EEE\CC\cong\CC^{1,4},
\end{equation}
where $\EEE^{4,1}$ is the standard model $(\EEE^5,\omega,h_0)$ above, and $\EEE^{1,4}$ is $(\EEE^5,\omega,-\bar h_0)$. 
Thus the decomposition $H^3(T) = H^3_\omega\oplus H^3_{\bar\omega}$ is a decomposition 
\begin{equation}
\label{eq:decomposition}
\CC^{5,5} = \CC^{4,1}\oplus\CC^{1,4},
\end{equation}
and the integral sub-module $\ZZ^{10}\subset \CC^{10}$ corresponding to  $H^3(T,\ZZ)\subset H^3(T,\CC)$ projects to Eisenstein \emph{lattices} $\EEE^{4,1}$, $\EEE^{1,4}$ in each summand.

The Hodge structure on $H^3(T)$ is determined by either subspace $H^{2,1}_\omega\subset H^3_\omega$ or $H^{2,1}_{\bar\omega}\subset H^3_{\bar\omega}$, since each  subspace determines the other by conjugation and orthogonality.   Either collection constitutes a ball $\BB^4$ in a Grassmannian, namely the positive hyperplanes in $\CC^{4,1}$, respectively the positive lines in $\CC^{1,4}$.   We choose the second description for the $\BB^4$ of (\ref{eq:periodmap}) with the consequent description of $\Gamma = PU(1,4,\EEE)$, the projectivized group of $\EEE$-linear isometries of $\EEE^{1,4}$.

\subsection{The period map}
\label{subsec:periodmap}

The \emph{period map} $\PPP$ is defined on a covering space of the space of cubic forms.  The points of this covering space corresponding to a way of choosing the isomorphisms in (\ref{eq:framing}), which is called a \emph{framing} in (3.9) of \cite{ACT}.   If $\phi$ is the chosen isomorphism, $\PPP(F,\phi) = \phi(H^{2,1}_\omega)\subset\CC^{1,4}$.  It descends to the map $\PPP$ of (\ref{eq:periodmap}).

Note that the choice of hermitian form in (2.4) of \cite{ACT} is the negative of the standard choice (\ref{eq:hermitian}). Consequently, we have to make all the obvious sign changes when quoting \cite{ACT}.  Note also that the isomorphisms  (\ref{eq:frame}) and (\ref{eq:framing}) are complex linear, and incorporate  the correction added in proof to (2.2) of \cite{ACT}.   Our choice is equivalent to Beauville's choice of a positive hyperplane in $\CC^{4,1}$, and $\PPP$ is holomorphic.

\subsection{The intermediate Jacobian}
\label{subsec:intermediatejacobian}

This period map determines $J(T)$, the intermediate Jacobian of $T$, which is a five-dimensional principally polarized abelian variety, thus classified by a point in the moduli space $\AAA_5 = Sp(10,\ZZ)\backslash \HH_5$, where $\HH_5$ is the Siegel upper half plane of genus $5$.  Taking a standard model $(\ZZ^{10},<\ ,\ >)$ of a free abelian group of rank $10$ with integral symplectic form $<\ ,\ >$ and defining hermitan form $h'$ on $\ZZ^{10}\otimes\CC = \CC^{10} = \CC^{5,5}$ as above,  $\HH_5$ is given as the space of positive Lagrangian subspaces.  These subspaces of $\CC^{10}$ are $5$-dimensional, $< \ ,\ >$-isotropic, and $h'$-positive.  We obtain a map $\iota:\BB^4\map\HH_5$ defined by
\[
\iota (l) = \bar l^\bot\oplus l \subset \CC^{5,5} = \CC^{4,1}\oplus\CC^{1,4},
\]
see (9.2.1) of \cite{ACT} and (4.2) of \cite{Beau}.  This gives a holomorphic, totally geodesic embedding $\iota:\BB^4\map \HH_5$ which is equivariant with respect to the natural homomorphism $\lambda:\Gamma\to Sp(10,\ZZ)$ given by $\lambda(\gamma) = \bar\gamma\oplus \gamma = {}^t\gamma^{-1}\oplus \gamma$ in the decomposition (\ref{eq:decomposition}) into spaces that are simultaneously dual and conjugate to each other (transposition is with respect to the complex  bilinear form $h_0(z,\bar w)$).   The image of $\lambda$ is the centralizer of $\sigma$ in $Sp(10,\ZZ)$.   It follows that  $\iota$  descends to an embedding $\iota:\Gamma\backslash\BB^4\map \AAA_5$, and the image of $\iota$ consists of those principally polarized abelian varieties that are isomorphic to one on which $\sigma$ acts.

\subsection{The hyperplane arrangement}
\label{subsec:hyperplanes}
The group $\Gamma$ is generated by complex reflections of order $6$ (hexaflections) on the hyperplanes $v^\bot$, where $v\in\EEE^{1,4}$ and $h_0^*(v) =-1$,  where $h_0^* = -\bar h_0$ is the standard form on $\CC^{1,4}$, see \S 7 of \cite{ACT}.   Let $\HHH\subset\BB^4$ be defined by 
\[
\HHH =\cup\{\BB^3(v^\bot):v\in\EEE^{1,4},\  h_0^*(v) =  - 1\}
\]
where $\BB^3(v^\bot)\subset\BB^4$ is the ball of positive lines of the $(1,3)$-form $h_0^*|v^\bot$.   This is a locally finite collection of hyperplanes and the group $\Gamma$ acts transitively on its elements.    
Moreover,  $\HHH$  is an \emph{orthogonal arrangement} of hyperplanes, meaning that any two elements are either disjoint or meet at right angles, see (7.29) of \cite{ACT}.  In particular, given any $x\in\BB^4$, there can be at most $4$ hyperplanes in this collection that contain $x$.

\subsection{Explicit formula for period map}
\label{subsec;explicitperiod}
To get an explicit formula for the period map, let $\gamma = \set{\gamma_0, \ldots, \gamma_4}$ be a basis, as a $
\ZZ[\omega]$-module, for the dual space $H_3(T,\ZZ)$ of $H^3(T,\ZZ)$, for which   the dual  hermitian form is diagonal with diagonal entries $
(-1,+1,+1,+1,+1)$.   (This choice is equivalent to a  framing as in (3.9) of  \cite{ACT}.)   Let $\Phi = \Phi(F)$ be the 
nonzero element $\Omega/F^{4/3}$  of $H^{2,1}_{\bar\omega}$ associated to the cubic form $F$ as in (3) of Theorem (6.5) of \cite{ACT}.  Then the \emph{period 
vector} associated to these  choices is the vector 
\begin{equation}
\label{eq:periodvector}
    \PPP(F,\gamma) = \left(\int_{\gamma_0} \Phi, \ldots, \int_{\gamma_4} \Phi
\right).
\end{equation}
More explicitly, let $f(x, y, z) = 0$ be an affine equation for the cubic surface
in one of the standard affine open sets of $\PP^3$.  Then
$w^3 = f(x,y,z)$ is an affine equation for the cubic threefold, 
and the  natural generator $\Phi(F)$ for $H^{2,1}_{\bar\omega}(T)$ is simply 
\[
   \Phi = \frac{dx\wedge dy \wedge dz}{w^4}.
\]
This is the analogue of the abelian differential $dx/y$ for an elliptic curve. 

\subsection{The main theorem of \cite{ACT}}
\label{subsec:maintheorem}

\begin{itemize}
\renewcommand{\labelitemi}{$\circ$}
\parskip5pt
 \item The map $\PPP$ of (\ref{eq:periodvector}) descends to a map $\PPP:\MMM_{sm}\map\Gamma\backslash\BB^4$ of the quotient spaces, and extends to the map $\PPP:\MMM_{st}\map\Gamma\backslash\BB^4$  of (\ref{eq:periodmap}), which is an isomorphism.  
 
 \item Moreover, if $\MMM_{sm}$ is the moduli space of \emph{smooth} cubic surfaces, then its restriction to $\MMM_{sm}$ gives an isomorphism
\[
\PPP:\MMM_{sm}\to \Gamma\backslash (\BB^4 - \HHH) 
\] 

\item  Finally, if $\MMM_{nod}$ is the moduli space of \emph{nodal} cubic surfaces, then $\MMM_{st} = \MMM_{sm}\cup\MMM_{nod}$ and the restriction of the period map to $\MMM_{nod}$ gives an isomorphism
\[
\PPP:\MMM_{nod}\to\Gamma\backslash\HHH = \Gamma_v\backslash \BB^3(v^\bot) = \Gamma_0\backslash\BB^3
\]
where $v$ is any fixed element of $\EEE^{1,4}$ with $h_0^*(v) = -1$.  For example, if $v = (0,1,0,0,0)$ and  $\Gamma_v$ is its stabilizer in $\Gamma$, then $\Gamma_0$ is the group $PU(1,3,\EEE)$ acting on $\BB^3$.

\end{itemize}

\subsection{The abelian varieties}
\label{subsec:imageabelian}
We need one final result that is not explicitly stated in \cite{ACT} but  follows easily from that paper.   In fact, it is the underlying philosophy  of proofs of the results just quoted.

\begin{itemize}
\renewcommand{\labelitemi}{$\circ$}
\parskip5pt
\item A principally polarized abelian variety $A\in\AAA_5$ is in the image $\iota(\Gamma\backslash\BB^4) = \iota\PPP(\MMM_{st})$ if and only if it has an automorphism $\sigma$ of order $3$ of type $(4,1)$, meaning that the induced action on $H^{1,0}(A)$ has eigenvalue $\omega$ of multiplicity $4$ and $\bar\omega$ of multiplicity $1$.

\item Furthermore, if $A\in\iota\PPP(\MMM_{st})$ as above, then $A\in\iota\PPP(\MMM_{sm})$ if and only if $A$ is, in addition, irreducible.

\item Similarly, if $A\in\iota\PPP(\MMM_{st})$ as above, then $A\in\iota\PPP(\MMM_{nod})$ if and only if $A$ is reducible and contains at least one irreducible factor isomorphic to a Fermat elliptic curve. 
 
\end{itemize}
The proof of the first statement follows from  the last statement of (\ref{subsec:intermediatejacobian}) and the fact that all automorphisms of type $(4,1)$ are conjugate under $Sp(10,\ZZ)$.  The second and third  statements follows from the fact that the intermediate Jacobian of a non-singular cubic threefold is irreducible, while each nodal surface gives rise to an $A_2$-singularity in the associated threefold, which is turn gives a summand of the Hodge structure of a Fermat elliptic curve.  See Lemmas (5.4), (5.7) of \cite{ACT} for the last point. 

\subsection{New results}
\label{subsec:newresults} 
Given this background, we now come to the results of this paper.  If $f$ has coefficients in a field $K$, then so does $\Phi$ --- it is
a $K$-rational differential. Consequently
$\Phi$ is well-defined up to a nonzero element of $K$.  Therefore questions
about the rationality of periods, referred to such a $K$-rational differential,
make sense for any field $L$ extending $K$.

There is a philosophy that periods of $K$-rational differentials are almost always transcendental;
this is, however, difficult to prove in any concrete instance.  Thus our focus will therefore be on \emph{period ratios}
such as $\omega_2/\omega_1$ for cubic curves or
\[
    v_i = \frac{\int_{\gamma_i} \Phi}{\int_{\gamma_0} \Phi}
\]
for cubic surfaces.  In the last ratio, we may rescale $\Phi$ so that $v_0 = 1$.
The resulting periods $v_i$, which are now relative to a differential with unknown rationality
properties, should be thought of as period ratios.  This will often be our point of view
in what follows.

Let us write the period vector as
$v = (a, b_1, b_2, b_3, b_4) = (a,b)$.  When $a = 1$,
we say that the period vector is \emph{normalized}.
In this case, since $(a,b)$ is a positive vector vector for $h_0^*$, we have $|a|^2 - |b|^2 >0$, so $b$ is a vector in $\CC^4$ of length at most 1.   
The corresponding Hodge structure of level
one and genus five is determined by  a $5\times10$ matrix
of periods $P = (A,B)$, where $P$ represents $\iota(a,b)$.   In the usual terminology,  $P$ is normalized in the case that $A$ is the identity
(in which case $B\in\HH_5$, that is, $B$ is symmetric with  positive-definite imaginary part).
Section (\ref{subsec:imageabelian}) tells us  that, in the presence of the action by $\sigma$, 
the period vector and the period matrix determine each other, up 
to natural equivalences (action of $\Gamma$ and $Sp(10,\ZZ)$).

Finally, recall that an abelian variety over $\CC$ is said to be 
of CM-type if (a) it is isogeneous to a product $A_1\times \cdots 
\times A_r$ of simple abelian varieties and (b) there are fields $K_i 
\subset End(A_i)\otimes\QQ$ such that $[K_i:\QQ] \ge 2\dim A_i$ 
(in which case $[K_i:\QQ]= 2\dim A_i$ and $K_i = End(A_i)\otimes
\QQ$.)  If the fields $K_i$ are equal, we say that $K = K_i$ is the 
CM field of the abelian variety.  See Mumford \cite[p. 347]
{Mumford:Shimura}.  We can now state our first result.

\begin{theorem}  Let $S$ be a cubic surface and let $J$ be its abelian 
\label{theorem:pvqomega}
variety.  
The following are 
equivalent: (a) one (and hence all) normalized period vectors of $S$ 
have coefficients in $\QQ(\omega)$;  (b) one (and hence all) 
normalized period matrices
of $S$ have coefficients 
in $\QQ(\omega)$, (c) $J$ is isogeneous to a product of 
Fermat elliptic curves.
\end{theorem}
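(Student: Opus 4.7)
My strategy is to show (a)$\Leftrightarrow$(b) by unwinding the explicit formula for $\iota$ from Section \ref{subsec:intermediatejacobian}, and then (b)$\Leftrightarrow$(c) by a lattice-commensurability argument in $\QQ(\omega)^5 \subset \CC^5$.

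For (a)$\Leftrightarrow$(b), the map $\iota(l) = \bar l^\perp \oplus l$ is defined by a single linear equation whose coefficients are the entries of $\bar l = (\bar l_0, \ldots, \bar l_4)$, so a spanning set of $\bar l^\perp \oplus l \subset \CC^{5,5}$ consists of vectors whose entries are $\QQ(\omega)$-linear expressions in the entries of $l$ and $\bar l$. Converting from an $\EEE^5$-basis of $H_3(T,\ZZ)$ to a $\ZZ^{10}$-basis is $\QQ(\omega)$-rational, and normalizing the resulting $5 \times 10$ matrix $(A,B)$ to $(I, A^{-1}B)$ preserves the field of definition. Since $\QQ(\omega)$ is closed under complex conjugation, (a) and (b) are equivalent. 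The ``hence all'' assertions follow because the ambiguities in (a) and (b) are changes of basis by $\Gamma \subset GL_5(\EEE)$ and by $Sp(10,\ZZ)$ respectively, each of which preserves $\QQ(\omega)$.

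For (c)$\Rightarrow$(b), the Fermat elliptic curve has normalized period $\tau = \omega$, so $E_\omega^5$ has normalized period matrix $\omega I_5 \in M_5(\QQ(\omega))$. An isogeny $\phi: J \to E_\omega^5$ carries a symplectic basis of the lattice $L_J \subset \CC^5$ into $\EEE^5$, yielding a matrix identity $\phi \cdot P_J = M_a + \omega M_b$ with $M_a, M_b$ integer $5 \times 10$ matrices. Splitting into first and last five columns and normalizing by inverting the first $5\times 5$ block cancels the factor $\phi$, so the resulting normalized $B_J$ is a product of matrices with entries in $\QQ(\omega)$.

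The main step is (b)$\Rightarrow$(c). Write $B_J = X + \omega Y$ with $X, Y \in M_5(\QQ)$. Positive-definiteness of $\mathrm{Im}(B_J) = (\sqrt{3}/2) Y$ forces $Y$ to be invertible. The lattice
\[
 L_J = \ZZ^5 + B_J \ZZ^5 = \ZZ^5 + X \ZZ^5 + \omega Y \ZZ^5
\]
therefore sits inside $\QQ^5 + \omega \QQ^5 = \QQ(\omega)^5 \subset \CC^5$, and has full $\QQ$-rank $10$ in this $\QQ$-subspace. Since $\EEE^5 = \ZZ^5 + \omega \ZZ^5$ is another full-rank lattice in $\QQ(\omega)^5$, there is a positive integer $N$ with $N L_J \subset \EEE^5$ of finite index. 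Multiplication by $N$ gives a $\CC$-linear isomorphism $\CC^5/L_J \to \CC^5/NL_J$, and the inclusion $NL_J \hookrightarrow \EEE^5$ descends to an isogeny $\CC^5/NL_J \to \CC^5/\EEE^5 = E_\omega^5$; composing gives $J \sim E_\omega^5$. The only subtlety is that the constructed isogeny is one of abelian varieties and a priori need not preserve principal polarizations, but the theorem asks only for this; compatibility of the isogeny with the $(4,1)$ type $\sigma$-action is automatic because $\omega \cdot \QQ(\omega)^5 = \QQ(\omega)^5$ is preserved by each step of the construction.
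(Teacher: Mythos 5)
Your proof is correct and follows essentially the same route as the paper's: the equivalence of (a) and (b) via the explicit formula for the period matrix in terms of the period vector, and the equivalence with (c) via the observation that the isogeny class is determined by $\Lambda\otimes\QQ\subset\CC^5$ together with the fact that the lattice of a product of Fermat elliptic curves spans $\QQ(\omega)^5$. You spell out the converse direction (c)$\Rightarrow$(b) and the commensurability step with $\EEE^5$ explicitly, which the paper leaves as a three-line sketch, but the underlying idea is identical.
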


It follows from the theorem that for a cubic surface with period 
vector in $\QQ(\omega)$, $End(J)\otimes_\ZZ \QQ$ is the ring of 
$5\times5$ matrices with coefficients in  $\QQ(\omega)$; as a 
consequence of the main theorem, we see that cubic surfaces with 
period vector in $\QQ(\omega)$ are of CM-type with CM field $
\QQ(\omega)$.  In particular, we get the following well-known fact:

\begin{corollary} The Hodge structures of CM type in $\BB^4$ are 
dense.
\end{corollary}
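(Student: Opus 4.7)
The plan is to extract the corollary directly from the equivalence (a)$\Leftrightarrow$(c) of Theorem \ref{theorem:pvqomega} by exhibiting a dense subset of $\BB^4$ consisting of CM points. A normalized period vector $(1,b_1,b_2,b_3,b_4)\in\CC^{1,4}$ represents a point in $\BB^4$ parametrized by $b=(b_1,\ldots,b_4)\in\CC^4$ subject only to the open inequality $|b|^2<1$. So it is enough to show that the subset
\[
D=\{b\in\QQ(\omega)^4 : |b|^2<1\}\subset\BB^4
\]
consists entirely of CM points, and that it is dense.

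First I would note that $D$ is dense in $\BB^4$. Indeed, $\QQ(\omega)=\QQ+\QQ\,\omega$ is dense in $\CC$ since $1$ and $\omega$ are $\RR$-linearly independent, so $\QQ(\omega)^4$ is dense in $\CC^4$; intersecting with the open set $\{|b|^2<1\}$ preserves density in that open set, which is precisely $\BB^4$ in the ball model.

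Next I would show each point of $D$ gives a CM Hodge structure. Given $b\in D$, the period map being an isomorphism onto $\Gamma\backslash\BB^4$ (Section \ref{subsec:maintheorem}) implies that $(1,b)$ is the normalized period vector of some stable cubic surface $S$, with intermediate Jacobian $J=J(T)$. By the implication (a)$\Rightarrow$(c) of Theorem \ref{theorem:pvqomega}, $J$ is isogeneous to a product of Fermat elliptic curves $E=\CC/\EEE$. Each such $E$ has $\mathrm{End}(E)\otimes\QQ=\QQ(\omega)$, a quadratic imaginary field of degree $2=2\dim E$, so $E$ is of CM-type with CM field $\QQ(\omega)$; consequently the product (and hence $J$, up to isogeny) is an abelian variety of CM-type in the sense recalled before Theorem \ref{theorem:pvqomega}. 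Under the embedding $\iota:\Gamma\backslash\BB^4\to\AAA_5$ of Section \ref{subsec:intermediatejacobian}, the image $\iota(b)$ is then a CM point of $\AAA_5$, so the Hodge structure attached to $b$ is of CM type.

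Combining the two steps, the set of CM points in $\BB^4$ contains the dense subset $D$ and is therefore itself dense. There is no substantive obstacle here; the only thing to watch is that ``CM type'' in the sense defined in the excerpt really does apply to a product of Fermat elliptic curves (which is immediate because each factor satisfies $[K_i:\QQ]=2\dim A_i$ with the same field $K_i=\QQ(\omega)$), and that one does not need to know that the $b\in D$ has $S$ defined over a number field — the CM property is a property of the Hodge structure, guaranteed by (c) of Theorem \ref{theorem:pvqomega}.
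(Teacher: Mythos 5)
Your proof is correct and follows essentially the same route as the paper: points of $\BB^4$ with normalized period vector in $\QQ(\omega)^4$ are dense, and by Theorem \ref{theorem:pvqomega} (via the surjectivity of the period map) each such point corresponds to an abelian variety isogeneous to a product of Fermat elliptic curves, hence of CM type with CM field $\QQ(\omega)$. The paper leaves the density of $\QQ(\omega)$-rational period vectors implicit, so your explicit verification is a welcome but not substantively different addition.
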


Because the period map $(\ref{eq:periodmap})$ is surjective, there is 
a dense set of smooth cubic surfaces with periods in $
\QQ(\omega)$.  As noted in  \cite[Theorem 11.6, 11.9]{ACT}, some explicit surfaces 
with periods of this kind are known. A period vector (not normalized) of the Fermat 
cubic surface $x^3 + y^3 + z^3 + w^3 = 0$ is 
\begin{equation}
\label{eq:periodfermat}
   v = (2 - \bar\omega, 1, 1, 1, 1).
\end{equation}
and that of the diagonal cubic surface $x^3 + y^3 + z^3 + w^3 + u^3 = 
0, x + y + 
x + w  + u = 0$ is
\begin{equation}
\label{eq:periodclebsch}
    v = (3,1,1,1,1).
\end{equation}
Again, it is more convenient to write this vector in non-normalized form.
The rationality properties of the differential form which gives the periods
are unknown.  Consequently the periods should be thought of as period ratios.

It is natural to ask how to characterize cubic surfaces with periods 
in $\QQ(\omega)$ in purely  geometric terms.  We do not know the answer, 
even conjecturally.

In a different direction, we state our 
 last result, which  concerns the opposite extreme of complex multiplication, namely cubic surfaces whose CM algebra is a field:  

\begin{theorem}  There exist simple, principally polarized abelian varieties $A$  of dimension five with three-fold symmetry  $\sigma$ of type $(4,1)$ and with rational endomorphism  ring $End(A)\otimes\QQ$  isomorphic to a CM-field $K$ of the form $K_0(\sqrt{-3})$, where $K_0$ is a totally real field of degree five.  The Hodge structure of $A$ is rational over $K$, and it is defined by a period vector rational over the same field.  By (\ref{subsec:imageabelian}), $A$ is the abelian varietly of a smooth cubic surface.
\end{theorem}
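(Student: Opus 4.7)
The plan is to construct $A$ directly by complex multiplication, exploiting the fact that $K = K_0(\sqrt{-3})$ automatically contains $\QQ(\omega)$, so that multiplication by $\omega$ supplies the required order-three symmetry $\sigma$. Fix any totally real quintic field $K_0$ (for concreteness, one may take $K_0 = \QQ(\zeta_{11}+\zeta_{11}^{-1})$) and let $K = K_0(\sqrt{-3})$; then $K$ is a CM field of degree $10$ with maximal totally real subfield $K_0$. The ten embeddings $K\hookrightarrow\CC$ split into five complex-conjugate pairs, one pair over each real embedding of $K_0$; in each pair one member sends $\omega$ to $\omega$ and the other sends $\omega$ to $\bar\omega$. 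Choose a CM type $\Phi = \{\phi_0,\phi_1,\phi_2,\phi_3,\phi_4\}$ with $\phi_0(\omega) = \bar\omega$ and $\phi_i(\omega) = \omega$ for $i = 1,\dots,4$, and for a fractional ideal $\mathfrak{a}\subset K$ form $A = \CC^5/\Phi(\mathfrak{a})$. Multiplication by $\omega$ then acts on $H^{1,0}(A) = \CC^5$ with eigenvalue $\omega$ of multiplicity $4$ and $\bar\omega$ of multiplicity $1$, which is exactly the type $(4,1)$ condition.

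Simplicity is automatic. Since $[K_0:\QQ]=5$ is prime, $K_0$ has no proper nontrivial subfield, so the only proper CM subfield of $K$ is $\QQ(\omega)$. A CM type pulled back from $\QQ(\omega)$ would send $\omega$ to a single fixed root under all five embeddings, whereas $\Phi$ mixes $\omega$ and $\bar\omega$; hence $\Phi$ is primitive, and by the theorem of Shimura--Taniyama $A$ is simple with $\mathrm{End}(A)\otimes\QQ = K$.

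The main obstacle is arranging a \emph{principal} polarization. Following the different-ideal approach communicated by Nori, polarizations of $A$ arise from purely imaginary $\xi\in K$ (i.e.\ $\bar\xi = -\xi$) with $\mathrm{Im}(\phi(\xi))>0$ for every $\phi\in\Phi$, via the Riemann form $E(x,y) = \mathrm{Tr}_{K/\QQ}(\xi x\bar y)$ on $\mathfrak{a}$; such a polarization is principal if and only if
\[
\xi \cdot \mathfrak{a}\,\bar{\mathfrak{a}} \cdot \mathfrak{d}_{K/\QQ} \;=\; \mathcal{O}_K,
\]
where $\mathfrak{d}_{K/\QQ}$ is the different of $K/\QQ$. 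Since $\mathfrak{d}_{K/K_0}$ is generated by $\sqrt{-3}$, this reduces to an ideal equation purely over $K_0$ together with a sign condition on a generator of $\mathfrak{d}_{K_0/\QQ}$ that is controlled by the narrow class group of $K_0$. For an explicit $K_0$ with tractable arithmetic, this verification can be carried out by direct calculation; the sign pattern imposed by the $(4,1)$ CM type makes the check nontrivial but finite, and this is the arithmetic heart of the theorem.

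Granted such a principally polarized $A$, the characterization recorded in \S\ref{subsec:imageabelian} places $A$ in $\iota\PPP(\MMM_{st})$, and simplicity implies irreducibility, so in fact $A = \iota\PPP(S)$ for a unique smooth cubic surface $S$. Finally, the Hodge filtration $H^{1,0}(A)\subset H^1(A,\CC)$ is cut out on $H^1(A,\QQ)\otimes K$ by the $\Phi$-idempotents of the $K$-action, so the Hodge structure is $K$-rational; a $K$-rational basis of $H_3(T,\EEE)\otimes K$ dual to a $K$-rational generator of $H^{2,1}_{\bar\omega}$ then produces a period vector with entries in $K$.
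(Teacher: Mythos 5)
Your strategy is the same as the paper's: form $A = \CC^5/\Phi(\fa)$ from $K = K_0(\sqrt{-3})$ with $K_0 = \QQ(\zeta_{11}+\zeta_{11}^{-1})$, pick the CM type so that $\omega$ acts with signature $(4,1)$, prove simplicity via primitivity of the CM type (your argument from the primality of $5$ is a clean variant of the paper's cyclic-Galois-group argument, and both are fine for this field), and obtain a principal polarization from an imaginary element $\xi$ whose ideal is the inverse different. The difficulty is that you stop exactly at what you yourself call ``the arithmetic heart of the theorem.'' The existence of a totally imaginary $\xi$ satisfying \emph{both} $\xi\,\fa\bar{\fa}\,\fd_{K/\QQ} = \OOO_K$ \emph{and} the positivity condition $\Im(\phi(\xi))>0$ for all $\phi\in\Phi$ is not automatic: after factoring out $\sqrt{-3}$ it amounts to finding a generator $\delta$ of $\fd_{K_0/\QQ}$ whose five real embeddings have signs matching the sign pattern $\epsilon$ forced by the $(4,1)$ CM type (one embedding negative, four positive, up to relabelling). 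Whether such a generator exists depends on which sign vectors are realized by units of $\OOO_{K_0}$, and it genuinely can fail --- the paper's own computation finds that $2$ of the $414$ totally real quintic fields of discriminant below $10^6$ do not satisfy the criterion. Since the theorem is an existence statement, asserting that the check is ``finite'' does not discharge it; you must exhibit a witness.

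The paper closes this gap by explicit computation in $K_0=\QQ(\rho)$, $\rho = 2\cos(2\pi/11)$: the class number is $1$, the different is generated by $\delta_0 = -4r^4+r^3+14r^2+4r-9$ (with $r=-\rho$), and multiplying by the unit $u_4 = r$ yields $\delta = r\delta_0$ whose embeddings have signs $(+,+,-,+,+)$; then $\beta=\delta^{-1}$ is $\epsilon$-positive for the matching CM type and $\alpha = -\beta\theta^{-1}$ gives the principal polarization. To complete your proof you need to supply this (or an equivalent) verification. The final paragraph of your proposal, on $K$-rationality of the Hodge structure and of the period vector, is correct and agrees with the paper's formula $(\ref{eq:ZFORMULA})$ relating $b$ and $Z$.
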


\noindent
{\bf Remark.} We study extreme cases of CM types.  In \cite[6.2]{Achter},  Achter 
classifies, up to isogeny, all possible CM types, endomorphism algebras, and Mumford-Tate groups of  abelian five-folds which admit an action of $\ZZ[\omega]$.

\section{Cubic surfaces with period vector rational over $\QQ(\omega)$}
\label{sec:pervecmat}

The key to the proof of the first theorem is an analysis of the 
periods of the most singular stable cubic surface.  We begin with 
the following:

\begin{proposition}  
\label{prop:cayley}
Any normalized period vector of the Cayley cubic 
surface,
\[
   \frac{1}{x} +  \frac{1}{y} +  \frac{1}{z} +  \frac{1}{w} = 0
\]
is $\Gamma$-equivalent to $v = (1,0,0,0,0)$.
\end{proposition}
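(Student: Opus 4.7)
The plan is to exploit that the Cayley cubic is the unique stable cubic surface with four nodes, and that four is precisely the maximum number of hyperplanes of $\HHH$ that can meet at any point of $\BB^4$. First, I would extract from the node--hyperplane correspondence implicit in Lemmas~(5.4) and~(5.7) of \cite{ACT} (invoked in Sections~\ref{subsec:maintheorem} and~\ref{subsec:imageabelian}) that each of the four nodes of the Cayley cubic $S$ contributes a Fermat elliptic curve summand to $J(T)$ and forces the image $p = \PPP(S) \in \BB^4$ to lie on a corresponding hyperplane of $\HHH$. By the orthogonality of $\HHH$ (Section~\ref{subsec:hyperplanes}), these four hyperplanes are then automatically pairwise orthogonal; equivalently, there exist $v_1, \ldots, v_4 \in \EEE^{1,4}$ with $h_0^*(v_i) = -1$ and $h_0^*(v_i, v_j) = 0$ for $i \ne j$, and any normalized period vector $v$ representing $p$ must satisfy $h_0^*(v, v_i) = 0$ for all $i$.

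Next, I would reduce the problem to pure lattice theory over $\EEE$. The sublattice $N := \bigoplus_{i=1}^{4} \EEE v_i \subset \EEE^{1,4}$ has Gram matrix $-I_4$ in the basis $(v_1, \ldots, v_4)$, so it is unimodular; this yields the orthogonal Eisenstein-lattice splitting $\EEE^{1,4} = N \oplus N^\perp$. The complement $N^\perp$ is a rank-one positive-definite unimodular Eisenstein hermitian lattice, and since the only positive real unit in $\EEE$ is $1$, we have $N^\perp \cong (\EEE, +1) = \EEE u_0$ for some $u_0 \in \EEE^{1,4}$ with $h_0^*(u_0) = +1$. Mapping $v_i \mapsto e_i$ for $i = 1, \ldots, 4$ together with $u_0 \mapsto e_0$ defines an $\EEE$-linear isometry of $\EEE^{1,4}$, whose class $\gamma \in \Gamma = PU(1,4,\EEE)$ is the equivalence I seek. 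Since $v \in (\mathrm{Span}_\CC(v_1, \ldots, v_4))^\perp = \CC\, u_0$, we get $\gamma(v) \in \CC\, e_0$, and normalization of the first coordinate forces $\gamma(v) = (1, 0, 0, 0, 0)$.

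The hard part will be the first paragraph: showing that the four nodes of the Cayley cubic really do produce four mutually orthogonal hyperplanes of $\HHH$ through the image point, i.e., that all four Fermat summands of $J(T)$ are realized \emph{simultaneously} in the Eisenstein lattice as pairwise orthogonal directions (rather than being obtained only one at a time by smoothing different subsets of the nodes). Once this is in hand, the lattice step is routine: it rests only on the small-rank classification of unimodular Eisenstein hermitian forms, essentially contained in the standardness of $h_0$ stated in Section~\ref{subsec;eisensteinstructure}.
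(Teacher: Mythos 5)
Your proposal is correct and follows essentially the same route as the paper: four nodes force the period point onto four distinct (hence, by orthogonality of the arrangement, pairwise orthogonal) hyperplanes of $\HHH$, and the quadruple intersection points form a single $\Gamma$-orbit containing $(1,0,0,0,0)$. The paper dismisses your "routine lattice step" with an "it is easy to see," and handles your flagged "hard part" simply by citing the node--hyperplane correspondence from the main theorem of \cite{ACT}, so your write-up is just a more detailed version of the same argument.
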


\begin{proof}

The Cayley cubic surface is a cubic surface with $4$ nodes.  It  is classically known to be  the surface with the maximum 
possible number of nodes.   Thus, by the last part of (\ref{subsec:maintheorem}), it lies in the intersection of $4$ distinct hyperplanes in $\HHH$.  One such point in $(1,0,0,0,0)$, which is  the intersection of $(0,1,0,0,0)^\bot,\dots,(0,0,0,0,1)^\bot$.  It is easy to see that these are precisely the points in $v\in \EEE^{1,4}$ with $h_0^*(v) = 1$ and that all such points are equivalent under $\Gamma$.   Thus the period of the Cayley cubic is the $\Gamma$-orbit of $(1,0,0,0,0)$.
\end{proof}

Note that this argument implies the well-know classical fact that all $4$-nodal cubic surfaces are  isomorphic over 
the complex numbers.  Also (\ref{subsec:maintheorem}) combined with (\ref{subsec:hyperplanes}) reproves the classical fact quoted in this proof that $4$ is the maximum number of nodes.

 As we show in a moment, the formula we will derive for the period matrix in terms of the 
 period vector will give:
\begin{corollary}
\label{cor:matrixcayley}
Any normalized period matrix of the Cayley cubic surface is 
$\Gamma$-equivalent to
\begin{equation}
\label{eq:cayleyperiodmatrix}
   P = \left( I, \omega I \right).
\end{equation}
where $I$ is the identity matrix.Thus the intermediate Jacobian of 
the Cayley cubic surface is the product of five Fermat elliptic 
curves.\end{corollary}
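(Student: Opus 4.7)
The plan is to combine Proposition~\ref{prop:cayley}, which identifies the normalized period vector of the Cayley surface with $v=(1,0,0,0,0)\in\CC^{1,4}$ up to $\Gamma$, with the general formula for the period matrix in terms of the period vector announced just before the corollary statement. The proof then reduces to evaluating that formula at $v=(1,0,0,0,0)$.

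To set up the formula I would fix an $\EEE$-basis $\epsilon_0,\ldots,\epsilon_4$ of $H^3(T,\ZZ)$ diagonalizing the hermitian form $h$ with diagonal $(-1,1,1,1,1)$. Using the two identifications (\ref{eq:frame}), (\ref{eq:framing}), which differ by a conjugation on the $\EEE$-structure, the element $\epsilon_i$ maps to $(e_i,e_i)\in\CC^{4,1}\oplus\CC^{1,4}$ and $\sigma\epsilon_i$ to $(\omega e_i,\bar\omega e_i)$. Inverting the formula of (\ref{subsec;eisensteinstructure}), the symplectic form in the $\ZZ$-basis $(\epsilon_i,\sigma\epsilon_i)_{i=0}^4$ is block-diagonal, with the $i=0$ block carrying a sign opposite to the blocks at $i\ge 1$ (reflecting the negative entry in the signature of $h$); a sign flip absorbs this discrepancy and yields a standard symplectic basis. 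Applying $\iota(l)=\bar l^\bot\oplus l$ and projecting each basis element onto $\iota(l)$ along its $h'$-orthogonal complement produces the period matrix $P$ as an explicit function of $v$.

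For $v=(1,0,0,0,0)$ one has $\bar l^\bot=\{z_0=0\}\subset\CC^{4,1}$, so $\iota(l)$ is the subspace of pairs $((0,a_1,a_2,a_3,a_4),(t,0,0,0,0))$. The projections of $\epsilon_0,\ldots,\epsilon_4$ onto $\iota(l)$ form the columns of the identity matrix, while the projections of $\sigma\epsilon_0,\ldots,\sigma\epsilon_4$ form a diagonal matrix with entries $(\bar\omega,\omega,\omega,\omega,\omega)$; after the sign flip of the previous paragraph the normalized period matrix is $(I,\omega I)$. Since this is the period matrix of $E^5=(\CC/\EEE)^5$ with its product principal polarization, the intermediate Jacobian of the Cayley cubic is the product of five Fermat elliptic curves.

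The main technical obstacle is coordinating the conjugation relating the two identifications (\ref{eq:frame}), (\ref{eq:framing}) with the sign flip at the negative direction $\epsilon_0$; getting this right is what ensures that the normalized $B$-block lies in $\HH_5$ rather than in its complex conjugate.
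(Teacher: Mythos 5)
Your proposal is correct and takes essentially the same approach as the paper: Proposition~\ref{prop:cayley} gives the normalized period vector $(1,0,0,0,0)$, and the paper likewise derives an explicit period-matrix formula in terms of the period vector (its equation~(\ref{eq:periodmatrix}), obtained by integrating the eigenforms $\Phi^i$ over the symplectic basis $\gamma\cup\{\sigma^{-1}\gamma_0,\sigma\gamma_1,\dots,\sigma\gamma_4\}$ and using the Riemann relations --- the dual of your projection computation) and then evaluates it at $b=0$ to get $(I,\omega I)$. The one wrinkle is your handling of the index-$0$ direction: a literal sign flip of $\sigma\epsilon_0$ turns the diagonal entry $\bar\omega$ into $-\bar\omega=1+\omega$ rather than $\omega$ (the paper instead replaces $\sigma\gamma_0$ by $\sigma^{-1}\gamma_0$, which fixes the symplectic sign and the eigenvalue simultaneously), but since $1+\omega$ and $\omega$ span the same lattice together with $1$, the asserted equivalence of the normalized period matrix with $(I,\omega I)$, and hence the identification with $E^5$, is unaffected.
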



\noindent
As a general principle, the period vector in our context determines 
the period 
matrix
and conversely:

Let us now compute the period matrix $P$ associated to a general 
cubic  surface $S$ with equation $F=0$, that is, the period matrix of the intermediate Jacobian of the associated cyclic cubic threefold $T$ as in (\ref{subsec:notation}). To this end, recall from (\ref{subsec;explicitperiod}) that 
$\gamma = \set{\gamma_0, \cdots, \gamma_4}$ 
is a unitary basis of $H_3(T,\ZZ)$ as a hermitian 
Eisenstein module.  Let 
$\gamma'= \set{\gamma_0', \cdots, \gamma_4'}
= \set{\sigma^{-1}\gamma_0, \sigma\gamma_1,\cdots, 
\sigma\gamma_4}$.  The the homology basis $\gamma \cup \gamma'$ is a sympletic 
basis with respect to the standard form
\[
J = \left(\begin{array}{rr}
     0 & I \\
     -I & 0
     \end{array}
     \right),
\]
where $I$ is the identity matrix.
Choose a basis $\set{\Phi^0, \ldots, \Phi^4}$ for $H^{2,1}$ where 
$\Phi^0 \in H^{2,1}_{\bar\omega}$ and $\Phi^i \in H^{2,1}_{\omega}$
for $i > 0$.  Then the period matrix
takes the form
\begin{equation}
\label{eq:pmatrix}
  P = \left( A,  B \right)
\end{equation}
where
\[
   A_{ij} = \int_{\gamma_j}\Phi^i \qquad 
   B_{ij} = \int_{\gamma_j'}\Phi^i.
\]
The change of variable formula in the calculus coupled with
the fact that the $\Phi^i$ are eigenvectors of $\sigma$ imply that
\begin{equation}
\label{eq:lambda}
  \int_{\gamma_j'}\Phi^i = \lambda \int_{\gamma_j}\Phi^i,
\end{equation}
where $\lambda \in \set{ \omega, \bar\omega}$.
We may choose the basis $\set{\Phi^1, \ldots, \Phi^4}$ so that $A_{ij} 
= 
\delta_{ij}$
for $i, j \in \set{1,\ldots, 4}$.  The first Riemann bilinear relation 
determines the first column of $A$ in terms of the second, while 
$(\ref{eq:lambda})$ determines $B$ in terms of $A$.  We conclude that
\begin{equation}
\label{eq:periodmatrix}
P =
\left(
\begin{array}{cccccccccc}
1 & b_1 & b_2 & b_3 & b_4 & \omega & \bar\omega b_1 & \bar\omega
b_2 & \bar\omega b_3 & \bar\omega b_4  \\
b_1 & 1 & 0 & 0 & 0 & \bar\omega b_1 & \omega  & 0  & 0 & 0 \\ 
b_2 & 0 & 1 & 0 & 0 & \bar\omega b_2 & 0  & \omega  & 0 & 0 \\
b_3 & 0 & 0 & 1 & 0 & \bar\omega b_3 & 0  & 0  & \omega & 0 \\
b_4 & 0 & 0 & 0 & 1 & \bar\omega b_4 & 0  & 0  & 0 & \omega
\end{array}
\right)
\end{equation}
 In the case of the Cayley cubic 
surface, Proposition (\ref{prop:cayley}) gives that the 
parameter vector $b$ is zero and the period matrix is
\[
P = \left( I, \omega I\right),
\]
where $I$ is the identity matrix.
It follows that the abelian variety of the Cayley cubic surface is the 
product of five Fermat elliptic curves.  This proves Corollary~\ref{cor:matrixcayley}.

The map $\psi(b) = Z$, where $Z = BA^{-1}$ and $A,B$ are as in (\ref{eq:pmatrix}), 
gives a matrix formula for the  imbedding  $\iota$ of (\ref{subsec:intermediatejacobian}) of the unit ball in the Siegel
upper half space of genus five.  It can be written more concisely as
\[
  P(b)  = \left(
  \begin{matrix}
  1 & b & \omega & \bar\omega b \\
  \transpose b & I & \bar\omega\transpose b & \omega I
  \end{matrix}
  \right)
\]
To understand better the location of $Z(b)$ in the Siegel upper
half space, note
the quantity $\delta = \det A = 1 - b\cdot b$ is nonzero since $|b|^2 < 1$.
Thus, if $P = (A,B)$ is the period matrix, we can form
$A^{-1}(A,B) = (1,Z)$, where $Z$ is the normalized matrix of $B$-periods,
a symmetric matrix with positive definite imaginary part.  The inverse of
$A$ is given by the following  expression:
\[
A^{-1} = 
\delta^{-1}
\left[
\begin{array}{ccccc}
1 & - b_1 & -b_2 & -b_3 & -b_4 \\
-b_1 & \delta_1 & b_1b_2 & b_1b_3 & b_1b_4 \\
-b_2 & b_1b_2 & \delta_2 & b_2b_3 & b_2b_4 \\
-b_3 & b_1b_3 & b_2b_3 & \delta_3 & b_3b_4 \\
-b_4 & b_1b_4 & b_2b_4 & b_3b_4 & \delta_4\end{array}
\right],
\]
where $\delta = 1 - (b_1^2 + b_2^2 + b_3^2 + b_4^2)$ is the determinant
of $A$ and where $\delta_i = \delta + b_i^2$.  Then 
\begin{equation}
\label{eq:ZFORMULA}
 Z = A^{-1}B = 
\delta^{-1}
\left[
\begin{array}{ccccc}
\delta_0' & -\theta b_1 & -\theta b_2 & -\theta b_3 & -\theta b_4 \\
-\theta b_1 & \delta_1' & \theta b_1b_2 & \theta b_1b_3 & \theta b_1b_4 \\
-\theta b_2 & \theta b_1b_2 & \delta_2' & \theta b_2b_3 & \theta b_2b_4 \\
-\theta b_3 & \theta b_1b_3 & \theta b_2b_3 & \delta_3' & \theta b_3b_4 \\
-\theta b_4 & \theta b_1b_4 & \theta b_2b_4 & \theta b_3b_4 & \delta_4'
\end{array}
\right],
\end{equation}
where 
\[
  \delta_0' = 
  \omega - \bar\omega(b_1^2 + b_2^2 + b_3^2 + b_4^2),
\]
and 
\[
   \delta_i' = \omega\delta - \theta b_i^2.
\]
Re-writing the  matrix $Z$, we obtain the following proposition:

\begin{proposition}
\label{prop:formulaimbedding}
Let $b\in\BB^4$ be a a row vector (thus $|b|<1$),  let $\transpose b$ be the 
corresponding column vector, and let  $b\otimes b$ be the matrix
whose $ij$-th entry is $b_ib_j$.   Then  
\begin{equation}
    Z(b) = \omega I + (\theta/\delta)
    \left(
    \begin{matrix}
    b \cdot b & -b \\
    - \transpose b & b\otimes b
    \end{matrix}
    \right),
\end{equation}
is  a  function of $b$ that maps points of the unit ball to points of the Siegel upper half space,
with the origin of the ball mapped to the normalized period matrix of product of five Fermat elliptic curves.  It gives a matrix formula for the imbedding $\iota:\BB^4\map\HH_5$ defined in (\ref{subsec:intermediatejacobian}).
\end{proposition}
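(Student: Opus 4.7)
The plan is essentially to repackage the explicit formula for $Z = A^{-1}B$ derived in (\ref{eq:ZFORMULA}) into the block form asserted, which is a pure linear-algebra exercise that I would streamline by avoiding the messy $\delta_i'$ notation altogether.

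First, I would inspect the period matrix (\ref{eq:periodmatrix}) and notice the clean relation
\[
B = \bar\omega\, A + \theta\, I,
\]
valid because every off-diagonal entry of $B$ equals $\bar\omega$ times the corresponding entry of $A$, while each diagonal entry of $B$ is $\omega = \bar\omega + \theta$. Multiplying by $A^{-1}$ on the left gives
\[
Z \;=\; A^{-1}B \;=\; \bar\omega\, I + \theta\, A^{-1}.
\]
This single identity replaces all the bookkeeping in (\ref{eq:ZFORMULA}) and is where the parameter $\theta = i\sqrt{3}$ actually comes from.

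Next, I would use the explicit formula for $A^{-1}$ already displayed above the proposition. A direct entry-by-entry check shows that
\[
\delta\, A^{-1} \;=\; \delta\, I + M,\qquad
M = \begin{pmatrix} b\cdot b & -b \\ -\transpose b & b\otimes b \end{pmatrix};
\]
indeed the $(0,0)$ entry gives $1 = \delta + b\cdot b$, each $(i,i)$ entry gives $\delta_i = \delta + b_i^2$, and the off-diagonal entries match by inspection. Substituting into the boxed identity,
\[
Z \;=\; \bar\omega\, I + \frac{\theta}{\delta}\bigl(\delta I + M\bigr) \;=\; (\bar\omega + \theta)\, I + \frac{\theta}{\delta}\, M \;=\; \omega\, I + \frac{\theta}{\delta}\, M,
\]
which is the desired formula.

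Finally, I would dispose of the two trailing assertions. Setting $b=0$ gives $M=0$ and $Z(0) = \omega I$, which by Corollary~\ref{cor:matrixcayley} is the normalized period matrix of the product of five Fermat elliptic curves. That $Z(b)$ computes the embedding $\iota$ of (\ref{subsec:intermediatejacobian}) is automatic: by construction, the period vector $b$ determines the period matrix $P=(A,B)$ of (\ref{eq:periodmatrix}), and $\iota$ is by definition the map sending $b$ to the normalized form $A^{-1}B \in \HH_5$. There is no essential obstacle; the only subtlety is choosing the reformulation $B=\bar\omega A + \theta I$ up front, which turns the proposition into a two-line computation instead of a comparison of $5\times 5$ matrices.
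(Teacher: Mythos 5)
Your proof is correct and follows the same basic route as the paper --- both verify the block formula by computing $A^{-1}B$ from the explicit period matrix (\ref{eq:periodmatrix}) and the displayed expression for $A^{-1}$ --- but your organization of the computation is genuinely cleaner. The paper multiplies $A^{-1}$ against $B$ entry by entry to produce the intermediate display (\ref{eq:ZFORMULA}) and then rewrites that matrix in block form; you instead isolate the identity $B=\bar\omega A+\theta I$ (valid because $\omega-\bar\omega=\theta$), which immediately gives $Z=A^{-1}B=\bar\omega I+\theta A^{-1}$ and reduces the proposition to the one-line observation $\delta A^{-1}=\delta I+M$. Besides being shorter, this explains structurally why $Z$ has the form $\omega I+(\theta/\delta)(\cdots)$, and it sidesteps a sign slip in the paper's intermediate formula: a direct check of $(A^{-1}B)_{ii}$ for $i\ge 1$ gives $\delta_i'=\omega\delta+\theta b_i^2$, which is what your derivation yields and what the final block formula requires, rather than $\omega\delta-\theta b_i^2$ as printed after (\ref{eq:ZFORMULA}). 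Your treatment of the trailing assertions (the value $Z(0)=\omega I$ and the identification of $Z$ with the embedding $\iota$) is at the same level of detail as the paper's and is adequate.
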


\medskip
\noindent
{\bf Proof of Theorem 1.}

The proof  proceeds as follows. (1) The isogeny class of an abelian variety with lattice $\Lambda\subset \CC^n$ is the isomorphism class of the embedding $\Lambda\otimes \QQ \subset \CC^n$ (isomorphism by complex linear maps of $\CC^n$).
(2) If $\Lambda_0$ is the lattice of $E^5$, where $E$ is the Fermat elliptic curve, 
then $\Lambda_0\otimes \QQ = \QQ(\omega)^5 \subset \CC^5$.
(3) If the period vector $b$ is in $\QQ(\omega)$, then formula $(\ref{eq:periodmatrix})$ for the period matrix 
shows the columns of the matrix, which give a basis for the lattice $\Lambda$,  have 
entries in $\QQ(\omega)$. ÊThus $\Lambda\otimes \QQ$ is isomorphic to $\Lambda_0\otimes Q$ as rational subspaces of $\CC^5$.

\section{Proof of the second theorem}


As we have just seen, one can distinguish certain points
in the ball quotient, e.g., those whose period vector is rational
over $\QQ(\omega)$.  In that case the Abelian variety
is isogeneous to a product of Fermat elliptic curves.  

We now seek special points in the ball quotient where the corresponding 
abelian variety is simple and where the rational endomorphism
ring is a number field.   To this end, consider first a totally real number 
field $K_0$ of degree $n$, and a purely imaginary quadratic extension
  which we may write as $K = K_0(\sqrt \delta)$ for some element $\delta$ in $K_0$.  The field $K$  is a \emph{$CM$ field}.  It has $2n$ distinct embeddings $\tau_i$ in the complex numbers.  A
\emph{CM type} for $K$ is a choice 
$\Phi = (\tau_1, \ldots, \tau_n)$, where $\tau_i \ne \bar \tau_j$ for any $i, j$. Let $\OOO_K$ be the ring of integers
in $K$.  Then one can form the complex torus $A(K,\Phi) = \CC^n/\Phi(\OOO_K)$.  Note that the ring $\OOO_K$ acts by endomorphisms on $A(K,\Phi)$, so that
\begin{equation}
    K \subset End(A(K,\Phi))\otimes \QQ.
\end{equation}
If $A(K,\Phi)$ is simple, then the rational endomorphism ring is a division
algebra of dimension at most $2n$ over $\QQ$.  But $\dim_\QQ K = 2n$, so
\[
    K = End(A(K,\Phi))\otimes \QQ.
\]
Thus the rational endomorphism ring of such a torus is the field $K$.

To polarize the torus $A(K,\Phi)$, we follow an argument of Mumford, \cite[page 212]{Mumford:Abelian}.  There he claims 
\begin{quote}
$(*)$ the existence of an element
$\alpha$ of $K$ such that $\tau_i(\alpha) = \sqrt{-1}\beta_i$, where the 
$\beta_i$ are positive reals.
\end{quote}
 Given such an element, the expression
\begin{equation}
\label{eq:polarization}
  H(x,y) = 2\sum_{i=1}^g \beta_i\tau_i(x)\overline{\tau_i(y)}
\end{equation}
defines a positive Hermitian form on $K$.  Its associated skew form is
\begin{equation}
\label{eq:polarization2}
   \Omega(x,y) = \Im H(x,y) 
      = - 2\Re \sum_{i=1}^g \tau_i(\alpha)\tau_i(x)\overline{\tau_i(y)}
      = - Tr_{K/\QQ}(\alpha x \bar y).
\end{equation}
Since $\alpha$ is an element of the field $K$, the trace form takes rational
values.  To ensure that the values of the form are integers, we choose
$\alpha$ to be in $\OOO_K^\vee$, the lattice dual to $\OOO_K$.

To show the existence of the element $\alpha$ in $(*)$, Mumford argues as follows.
Let $\delta$
be an element of $K_0$ such that $K = K_0(\sqrt{\delta})$.  
Since $K$ is a totally imaginary extension of the totally real field $K_0$, for each $i$ we
can write $\tau_i(\sqrt{\delta}) = \sqrt{-1}\gamma_i$, where $\gamma_i$ is a nonzero real
number.  One can also find an element $\eta \in K_0$ such that $\tau_i(\eta)$
and $\gamma_i$ have the same sign for all $i$.  Then $\alpha = \eta\sqrt{\delta}$ is such that
 $\tau_i(\alpha) = \sqrt{-1}\beta_i$, where $\beta_i$ is a positive
real number for all $i$.  To summarize, we have the following:

\begin{proposition} Given an element $\alpha \in \OOO_K^\vee$ 
satisfying $(*)$, the expression $(\ref{eq:polarization})$
 defines a polarization of $A(K,\Phi)$.
\end{proposition}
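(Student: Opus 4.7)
The plan is to verify the two defining conditions of a polarization on the complex torus $A(K,\Phi) = \CC^n/\Phi(\OOO_K)$: that $H$ is a positive-definite Hermitian form on the ambient $\CC^n$, and that its imaginary part $\Omega = \Im H$ takes integer values on $\Phi(\OOO_K)\times \Phi(\OOO_K)$. The CM type $\Phi$ provides an $\RR$-linear identification $K\otimes_\QQ \RR \cong \CC^n$ under which each $\tau_i$ becomes the $i$-th complex-linear coordinate projection, so all computations can be carried out on the right-hand side and the formula $H(x,y) = 2\sum_i \beta_i \tau_i(x)\overline{\tau_i(y)}$ extends by $\CC$-bilinearity in the natural way.

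Positivity is immediate: $H(x,x) = 2\sum_i \beta_i |\tau_i(x)|^2 > 0$ whenever $x\ne 0$, since the $\beta_i$ are strictly positive reals and the $\tau_i$ are jointly faithful on $K\otimes\RR$. Hermitian symmetry is formal. The heart of the argument is the trace identity $\Im H(x,y) = -Tr_{K/\QQ}(\alpha x \bar y)$ asserted in (\ref{eq:polarization2}), where $\bar{}$ denotes the CM involution on $K$. This follows by decomposing the trace over the $2n$ embeddings of $K$, which split as $\{\tau_i\}\cup \{\bar\tau_i\}$, noting that $\tau_i$ intertwines the CM involution on $K$ with complex conjugation on $\CC$ so that $\tau_i(\bar y) = \overline{\tau_i(y)}$, and using $\tau_i(\alpha) = \sqrt{-1}\beta_i$ to observe $\overline{\tau_i(\alpha)} = -\tau_i(\alpha)$. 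Expanding the trace and collecting the two halves gives $-2\sum_i \beta_i \Im(\tau_i(x)\overline{\tau_i(y)})$, which is precisely $-\Im H(x,y)$.

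With this identity in hand, integrality of $\Omega$ on the lattice follows at once from the hypothesis $\alpha \in \OOO_K^\vee$: for $x,y\in \OOO_K$ we have $x\bar y \in \OOO_K$ because the CM involution preserves the ring of integers, and then $Tr_{K/\QQ}(\alpha\cdot x\bar y) \in \ZZ$ by the defining property of the codifferent. Combined with positivity and the standard fact that a positive-definite Hermitian form with $\ZZ$-valued imaginary part on the lattice is exactly a Riemann form, this establishes that $H$ is a polarization.

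The main obstacle, as I see it, is not conceptual but organizational: executing the trace computation cleanly requires keeping three different conjugations straight at once --- the CM involution on $K$, complex conjugation on $\CC$, and the pairing of the embeddings $\tau_i$ with their conjugates $\bar\tau_i$ inside the Galois sum. Everything else --- positivity, the role of the codifferent, and the translation from a Riemann form to a polarization of $A(K,\Phi)$ --- is essentially automatic given the setup already laid out in the excerpt.
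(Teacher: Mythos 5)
Your argument is correct and is essentially the paper's own (which follows Mumford, \cite[page 212]{Mumford:Abelian}): positivity of $H$ from $\beta_i>0$, the identity $\Omega(x,y)=\Im H(x,y)=-Tr_{K/\QQ}(\alpha x\bar y)$ obtained by splitting the trace over the conjugate pairs of embeddings, and integrality on $\Phi(\OOO_K)$ from $\alpha\in\OOO_K^\vee$. Nothing further is needed.
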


There remains the question of whether this polarization, which 
is determined by a suitable element $\alpha \in K$, can be chosen 
to be principal.  If $\alpha \in \OOO_K^\vee$ and in addtion,
\begin{equation}
\label{eq:ooo}
   \alpha\OOO_K = \OOO_K^\vee,
\end{equation}
then the form $\Omega(x,y)$ is unimodular.  (See \cite[p 195]{Neukirch}, where
the dual as a module is identified with the dual with respect to the trace pairing.)
The polarization $(\ref{eq:polarization2})$ is principal exactly when condition $(\ref{eq:ooo})$
holds; this in turn hold if and only if $\alpha^{-1}$ generates the 
\emph{different} of $K$ --- the fractional ideal $(\OOO_K^\vee)^{-1}$.
(Thus in the cases we consider, the different is a principal ideal.) To conclude, we 
have

\begin{proposition} The polarization defined by $\alpha \in \OOO_K^\vee$  is principal
if and only if $\alpha^{-1}$ generates the different of $K$.
\end{proposition}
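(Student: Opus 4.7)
The plan is to translate ``principal polarization'' into an ideal-theoretic condition by computing the $\Omega$-dual of $\OOO_K$ inside $K$ and recognizing the resulting ideal identity as the assertion that $\alpha^{-1}$ generates the different.  First I would recall the general principle: a $\ZZ$-valued skew form $\Omega$ on a lattice $\Lambda$ is unimodular---equivalently, defines a principal polarization on the associated torus---if and only if $\Lambda$ equals its $\Omega$-dual $\Lambda^{*} = \{y\in\Lambda\otimes\QQ : \Omega(\Lambda,y)\subset\ZZ\}$.  In our setting $\Lambda = \OOO_K$ and $\Omega(x,y) = -Tr_{K/\QQ}(\alpha x \bar y)$, so the task is to identify $\OOO_K^{*}$ as a fractional ideal of $K$.

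The calculation is short.  Rewriting $\Omega(x,y) = -Tr((\alpha\bar y)\,x)$ and using the defining property of the trace dual $\OOO_K^\vee$, the condition $\Omega(x,y)\in\ZZ$ for every $x\in\OOO_K$ is equivalent to $\alpha\bar y\in\OOO_K^\vee$, i.e.\ to $\bar y\in\alpha^{-1}\OOO_K^\vee$.  At this point I would invoke two compatibilities of complex conjugation: it preserves $\OOO_K^\vee$ (since $Tr$ is Galois-invariant and $\OOO_K$ is stable under conjugation), and, $\alpha$ being totally imaginary by $(*)$, $\bar\alpha = -\alpha$.  Applying conjugation to the displayed inclusion then yields $\OOO_K^{*} = \bar\alpha^{-1}\OOO_K^\vee = \alpha^{-1}\OOO_K^\vee$ as fractional ideals, the sign being absorbed in the ideal.

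Combining the two steps, unimodularity of $\Omega$ is equivalent to the fractional-ideal identity $\alpha^{-1}\OOO_K^\vee = \OOO_K$, which rearranges to $\alpha\OOO_K = \OOO_K^\vee$---this is precisely condition (\ref{eq:ooo}).  Taking inverses and invoking the definition $\fd_K = (\OOO_K^\vee)^{-1}$ of the different rewrites it as $\alpha^{-1}\OOO_K = \fd_K$, i.e.\ $\alpha^{-1}$ generates the different, which is the claim.  The only delicate point---and where a naive argument would go astray---is the bookkeeping of complex conjugation in the dual calculation; a careless handling would produce a statement involving $\bar\alpha$ rather than $\alpha$, but the identity $\bar\alpha = -\alpha$ guarantees that this discrepancy is invisible at the level of fractional ideals.
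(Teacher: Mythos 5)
Your proof is correct and follows essentially the same route as the paper, which compresses the entire argument into the assertion that unimodularity of $\Omega$ is equivalent to $\alpha\OOO_K = \OOO_K^\vee$ together with a citation of Neukirch identifying the module dual with the trace dual. Your explicit computation of the $\Omega$-dual lattice, including the observation that $\bar\alpha = -\alpha$ makes the conjugation harmless at the level of fractional ideals, simply supplies the detail the paper leaves implicit.
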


In the case that $K = K_0(\sqrt{-3})$, one can restate the criterion $(*)$
in terms of the different of $K_0$. To this end we introduce the following notion.

\begin{definition}
Let $\beta$ be an element of a totally real field $K_0$ of degree
$n$ over $\QQ$. Let $\tau_i$, $i = 1..n$ be the imbeddings of $K_0$
in the real numbers. Let $\epsilon$ be an $n$-vector with entries $\pm 1$.
Then $\beta$ is $\epsilon$-positive if 
\[
  \epsilon_i\tau_i(\beta)> 0 \hbox{ for all $i$.}
\] 
\end{definition}

\noindent
In the case that $\epsilon_i = +1$ for all $i$,
$\epsilon$-positivity the same as total positivity.

\begin{proposition}
\label{prop:criterionbeta}
Fix a vector $\epsilon = (\epsilon_1, \ldots, \epsilon_n)$ with $\epsilon_i = \pm 1$.
Let $\Phi$ be a CM-type for $K$ which extends  a set of embeddings
$\tau_i:K_0 \map \RR$ to embeddings $\tau_i: K \map \CC$ such that
$\tau_i(\sqrt{-3}) = \epsilon_i\theta$.
Suppose that there is an $\epsilon$-positive element $\beta$ 
 of $K_0$ whose inverse generates the different of $K_0$.  Then  $\alpha = -\beta\theta^{-1}$ defines a principal polarization of $A(K,\Phi)$.
\end{proposition}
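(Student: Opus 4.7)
The plan is to reduce the claim to the two preceding propositions by checking (i) that $\alpha = -\beta\theta^{-1}$ satisfies Mumford's positivity condition $(*)$, and (ii) that $\alpha^{-1}$ generates the different $\fd_{K/\QQ}$; the latter automatically places $\alpha$ in $\OOO_K^\vee$ and makes the polarization principal.

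For step (i), under each embedding $\tau_i$ we have $\tau_i(\sqrt{-3}) = \epsilon_i\theta$, so
\[
\tau_i(\alpha) = -\frac{\tau_i(\beta)}{\epsilon_i\theta} = i \cdot \frac{\epsilon_i\tau_i(\beta)}{\sqrt{3}}.
\]
The $\epsilon$-positivity of $\beta$ makes each $\epsilon_i\tau_i(\beta)$ positive, so $\tau_i(\alpha) = i\beta_i$ with $\beta_i > 0$, which is exactly condition $(*)$.

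For step (ii), I would exploit the tower $\QQ \subset K_0 \subset K$ together with the multiplicativity
\[
\fd_{K/\QQ} = \fd_{K/K_0}\cdot(\fd_{K_0/\QQ}\OOO_K).
\]
The hypothesis on $\beta$ gives $\fd_{K_0/\QQ} = (\beta^{-1})$ in $\OOO_{K_0}$. Since $K = K_0(\omega)$ with $\omega$ a root of $x^2+x+1$ whose derivative at $\omega$ equals $2\omega+1 = \sqrt{-3}$, the standard formula for the different of a monogenic extension yields $\fd_{K/K_0} = (\sqrt{-3})$, provided $\OOO_K = \OOO_{K_0}[\omega]$. Combining, $\fd_{K/\QQ} = (\sqrt{-3}/\beta)\OOO_K = (\alpha^{-1})\OOO_K$, so $\alpha^{-1}$ generates $\fd_{K/\QQ}$ and in particular $\alpha \in \OOO_K^\vee$. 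The two preceding propositions then deliver the principal polarization.

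The main obstacle is the identification $\OOO_K = \OOO_{K_0}[\omega]$, which can fail at primes of $\OOO_{K_0}$ lying above $3$. Handling this requires either a hypothesis on the ramification of $3$ in $K_0/\QQ$, or a local computation of $\fd_{K/K_0}$ at those primes without assuming monogenicity. In the concrete applications later in the paper, the field $K_0$ is presumably chosen so that the identification holds, at which point the argument above completes the proof.
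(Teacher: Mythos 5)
Your argument is essentially the paper's own proof: the same computation verifies Mumford's condition $(*)$, and the same appeal to multiplicativity of the different in the tower $\QQ \subset K_0 \subset K$ shows that $\alpha^{-1}$ generates $\fd_{K/\QQ}$, hence that $\alpha\OOO_K = \OOO_K^\vee$. The subtlety you flag --- that $\fd_{K/K_0} = (\sqrt{-3})\OOO_K$ requires $\OOO_K = \OOO_{K_0}[\omega]$, which could fail at primes of $K_0$ above $3$ --- is glossed over in the paper as well (it simply cites Neukirch and asserts that the product of the two generators gives a generator of $\fd_{K/\QQ}$); in the intended applications $K_0$ has discriminant prime to $3$ (e.g.\ $11^4$), so $K_0$ and $\QQ(\omega)$ are arithmetically disjoint and the identification does hold.
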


\begin{proof}
The element $\alpha = -\beta\theta^{-1}$ is the product of inverses of generators
of the different for  $K_0$ and $\QQ(\sqrt{-3})$.  By the multiplicativity
of the different, this product is an inverse of a generator of the different of $K$, that is, a generator 
of $\OOO_K^\vee$.  See \cite[Prop. 2.2, p. 195]{Neukirch}.
Now 
\[
  \tau_i(\alpha) 
    = -\tau_i(\beta)\tau_i(\theta^{-1}) 
    = \sqrt{-1}\left(\epsilon_i\tau_i(\beta)/\sqrt{3}\right).
\]  
The quantity in parenthesis is positive if $\epsilon_i\tau_i(\beta)$ is positive,  Thus both $(\ref{eq:ooo})$ and $(*)$ 
are satisfied.  
\end{proof}

There remains the question of whether the conditions of $\epsilon$-positivity
for a generator of the different of $K_0$ can ever be satisfied.  The next result 
sets forth a criterion for its satisfaction which can sometimes be verified by computation.  We will carry out such a computation in the next section.

\begin{proposition}
Let $\Phi$ and $\beta$ be as in the preceding proposition.  Suppose that there is a unit $\eta$ of $\OOO_{K_0}$ such that $\tau_i(\eta)$ has
the same sign as $\epsilon_i\tau_i(\beta)$.
Then the element $\beta\eta$ is $\epsilon$-positive.
\end{proposition}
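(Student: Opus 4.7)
The plan is to verify $\epsilon$-positivity of $\beta\eta$ by a one-line computation, using that each $\tau_i$ is a ring homomorphism. Since $\tau_i\colon K_0\to\RR$ respects multiplication,
\[
    \epsilon_i\,\tau_i(\beta\eta)\;=\;\epsilon_i\,\tau_i(\beta)\,\tau_i(\eta)\;=\;\bigl(\epsilon_i\tau_i(\beta)\bigr)\cdot\tau_i(\eta).
\]
By hypothesis $\tau_i(\eta)$ has the same sign as $\epsilon_i\tau_i(\beta)$, so the product of these two nonzero real numbers is strictly positive for every $i$. This is exactly the definition of $\epsilon$-positivity applied to $\beta\eta$.

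It is also worth recording the parallel fact that makes the proposition useful in practice: since $\eta$ is a unit of $\OOO_{K_0}$, we have an equality of fractional ideals $(\beta\eta)=(\beta)$. In particular $(\beta\eta)^{-1}$ still generates the different of $K_0$, so $\beta\eta$ may be substituted for $\beta$ in the preceding proposition to produce the principal polarization of $A(K,\Phi)$.

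In short, the proof itself presents no real obstacle; the proposition is essentially a bookkeeping step. The genuine mathematical content is that the problem of producing an $\epsilon$-positive generator of the inverse different of $K_0$ is reduced to the question: does the signature map $\OOO_{K_0}^{\times}\to\{\pm 1\}^n$, $\eta\mapsto(\mathrm{sgn}\,\tau_1(\eta),\ldots,\mathrm{sgn}\,\tau_n(\eta))$, hit the prescribed pattern of signs of $\epsilon_i\tau_i(\beta)$? This is a subtle matter of unit theory in $K_0$ --- the image of the signature map need not be all of $\{\pm 1\}^n$ --- and will presumably be where the work of the next section lies. The present proposition isolates the purely algebraic step so that only this sign-realization problem remains.
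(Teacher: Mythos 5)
Your proof is correct and is essentially the paper's argument (the paper phrases it as the two sign vectors lying in the same octant, you phrase it as the explicit computation $\epsilon_i\tau_i(\beta\eta)=(\epsilon_i\tau_i(\beta))\tau_i(\eta)>0$ — the same one-line observation). Your added remark that $(\beta\eta)^{-1}$ still generates the different because $\eta$ is a unit is also correct and matches how the proposition is used later.
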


\begin{proof}
If the 
vectors $(\tau_1(\eta), \ldots , \tau_n(\eta))$  and
$(\epsilon_i\tau_1(\beta), \ldots , \epsilon_n\tau_n(\beta))$,
lie in the same octant then the conditions of the previous proposition are satisfied.
\end{proof}

Let us return to the case of $K = K_0(\sqrt{-3})$, where
$K_0$ is a totally real quintic field, and consider the problem 
of whether there are choices so that $A(K,\Phi, \alpha)$
is a principally polarized abelian variety which has
an automorphism of order three, and such that the 
eigenvalues of that automorphism on the space
of abelian differentials are $\omega$ with multiplicity
four and $\bar\omega$ with multiplicity one.

To this end, let 
$\tau_1, \ldots, \tau_5$ be the imbeddings of $K_0$ in $\RR$.
Extend them to imbeddings of $K$ in $\CC$ by requiring 
\begin{equation}
\label{eq:cmtypew}
\hbox{$\tau_1(\sqrt{-3}) = -\theta$ and $\tau_i(\sqrt{-3}) = \theta$
for $i > 1$.} 
\end{equation} 
Then $(\tau_1, \ldots, ,\tau_5)$ is a CM type for $K$ with the
property that $\Phi(\frac{-1 + \sqrt{-3}}{2})$ acts on $\CC^5$ with eigenvalues
$(\bar\omega, \omega, \omega, \omega, \omega)$.  It follows
that
\[
   \dim H^{1,0}_{\bar\omega}(A(K,\Phi)) = 1.
\]

Now let us construct a field with required properties.
Let $\zeta = \exp(2\pi\sqrt{-1}/11)$ be a primitive $11$-th 
root of unity.  The 
totally real subfield of $\QQ(\zeta)$ is $K_0 = \QQ(\rho)$,
where 
\[
  \rho = \zeta + \zeta^{-1} 
    = 2\cos(2\pi/11)
    = 1.6825...
\]
It is the totally really quintic field
of smallest discriminant, namely $11^4 = 14641$.   It is not hard to see
that $\QQ(\zeta)$ is a CM field and that one can construct a simple
Abelian variety from it. See  \cite[p. 24]{Lang:CM}.  
What we need, however, is a simple abelian variety with a suitable
action of $\omega$.  To this end, we establish the following:

\begin{proposition}  Let $K = K_0(\sqrt{-3})$.
Let $\Phi$ be the CM type extended from a vector of embedding $(\tau_1, \ldots \tau_5)$
of $K_0$ as in equation $(\ref{eq:cmtypew})$.  Then $A(K,\Phi)$ is a simple, principally polarized abelian
variety.  It therefore corresponds to a smooth cubic surface.
\end{proposition}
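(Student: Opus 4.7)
The plan is to verify the two nontrivial claims — principal polarizability and simplicity — and then invoke (\ref{subsec:imageabelian}) to conclude. The CM-type condition (\ref{eq:cmtypew}) has already been arranged so that multiplication by $\omega = (-1+\sqrt{-3})/2$ acts on $H^{1,0}(A(K,\Phi))$ with eigenvalues $(\bar\omega, \omega, \omega, \omega, \omega)$; the $(4,1)$-condition required by (\ref{subsec:imageabelian}) is therefore automatic, and smoothness of the associated cubic surface will follow from the irreducibility of $A(K,\Phi)$, which is implied by simplicity.

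For the polarization, I would apply Proposition \ref{prop:criterionbeta} with $\epsilon = (-1,+1,+1,+1,+1)$, the sign vector dictated by (\ref{eq:cmtypew}). This reduces the task to producing an $\epsilon$-positive $\beta \in K_0$ whose inverse generates the different of $K_0$. The different of $K_0/\QQ$ is the principal ideal generated by $f'(\rho)$, where $f(x) = x^5 + x^4 - 4x^3 - 3x^2 + 3x + 1$ is the minimal polynomial of $\rho = \zeta + \zeta^{-1}$; any other generator differs from $f'(\rho)$ by a unit. Thus it suffices to find a unit $\eta \in \OOO_{K_0}^*$ whose sign vector under the five real embeddings corrects that of $f'(\rho)^{-1}$ to match $\epsilon$, and then set $\beta = \eta/f'(\rho)$. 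The existence of such a unit follows from the surjectivity of the signature map $\OOO_{K_0}^* \to \{\pm 1\}^5$ for the real quintic $K_0 = \QQ(\zeta_{11})^+$ (equivalently, its narrow class number equals its class number, which is $1$). The element $\alpha = -\beta\theta^{-1} \in \OOO_K^\vee$ then furnishes a principal polarization via (\ref{eq:polarization2}).

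For simplicity, I would use the standard Shimura--Taniyama criterion: $A(K,\Phi)$ is simple if and only if $\Phi$ is primitive, i.e.\ not induced from a CM-type on a proper CM subfield of $K$. Since $[K_0:\QQ] = 5$ is prime, $\mathrm{Gal}(K/\QQ) \cong \ZZ/10$, and the only subfields of $K$ are $\QQ$, $\QQ(\sqrt{-3})$, $K_0$, and $K$; of these only $\QQ(\sqrt{-3})$ is a proper CM subfield. Its two CM-types induce on $K$ the constant sign patterns $(+1,+1,+1,+1,+1)$ and $(-1,-1,-1,-1,-1)$, respectively. Our $\epsilon = (-1,+1,+1,+1,+1)$ is neither, so $\Phi$ is primitive and $A(K,\Phi)$ is simple.

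The main obstacle is the explicit verification underlying the polarization step: surjectivity of the signature map must be witnessed concretely by producing a unit of $K_0$ whose images realize the required sign correction. This is a finite but nontrivial computation with cyclotomic units of $\QQ(\zeta_{11})^+$, which the paper carries out in the next section after numerically evaluating the five real conjugates of $f'(\rho)$.
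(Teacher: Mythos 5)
Your proposal is correct and follows essentially the same route as the paper: principal polarizability via Proposition~\ref{prop:criterionbeta} by adjusting a generator of the different of $K_0$ by a unit of the right signature (the paper exhibits the unit and the resulting $\epsilon$-positive generator explicitly, with $\epsilon$ having a single minus sign, rather than invoking surjectivity of the signature map), and simplicity via the cyclic degree-ten Galois group, whose only proper CM subfield is $\QQ(\omega)$, together with the non-constancy of the signs in $(\ref{eq:cmtypew})$ --- your primitivity phrasing and the paper's appeal to Mumford's criterion are the same argument.
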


\begin{proof}
 The class number of $K_0$ is one, so that all ideals
in $\OOO_{K_0}$ are principal.  The different of $\OOO_{K_0}$ 
is the ideal generated by 
\[
   \delta_0 = -4r^4 + r^3 + 14r^2 + 4r - 9,
\]
where $r = -\rho$ generates $K_0$.
This element is not totally positive.  
The group of units of $\OOO_{K_0}$ is isomorphic to $C_2\times \ZZ^4$.
Generators for the free abelian part of this group are
\[
   u_1, u_2, u_3, u_4 = r^4 - 3r^2 + 1, r^2 - r - 1, r - 1, r,
\]
so that an arbitrary unit has the form $\pm u_1^j u_2^k u_3^\ell u_4^m$.
One finds that element
\[
    \delta = u_4\delta_0,
\]
which also generates the different ideal, is 
$\epsilon$-positive for $\epsilon = (+, +, -, +, +)$.  
The element $\beta = \delta^{-1}$ is also $\epsilon$-positive,
and it is the element we use to define the principal polarization.
The embeddings
of $\delta$ in $\RR$ are
\begin{verbatim}
  21.7307463515808
   4.26952134163076 
  -1.91569396353523 
  14.0542888631537
   5.86113740717001
\end{verbatim}
The corresponding embeddings of the generator $r$ of $K$ 
over $\QQ$ are
\begin{verbatim}
  -1.68250706566236
  -0.830830026003773
   0.284629676546570
   1.30972146789057
   1.91898594722899
\end{verbatim}
The criterion for $\epsilon$-positivity is satisfied,
and the abelian variety $A(K,\Phi, \alpha)$ is principally polarized.


To conclude that $A(K,\Phi, \alpha)$ corresponds to a smooth cubic surface,
following \ref{subsec:imageabelian},
it only remains to show that it is irreducible.  It will be enough to show
that it is simple.  To that end, consider the Galois
group $G$ of $K/\QQ$.  It is a group of order ten.  The subgroup
$Gal(K_0/\QQ)$ is normal.  Since it is of order five, it is cyclic.
The subgroup $Gal(\QQ(\omega)/\QQ)$ is also normal, and since it is of
order two, it is cyclic.  A group of order ten with these
structural features must be cyclic.

The subfields of $K$ are the fixed sets of subgroups of the Galois group.
Since a cyclic group of order ten has only two non-trivial subgroups,
the field $K$ has only two nontrivial subfields, namely, $K_0$ and $\QQ(\omega)$.
Once we know the subfields of $K$, Mumford's criterion for simplicity of $
A(K,\Phi, \alpha)$ \cite[p. 213-14]{Mumford:Abelian}
is easy to apply. If suffices to show that there are $\tau_i$ and $\tau_j$
such that $\tau_i|\QQ(\omega) \ne  \tau_j|\QQ(\omega)$.  The condition 
on the $\tau$'s holds by construction.  See equation $(\ref{eq:cmtypew})$.
\end{proof}

\noindent
We have a construction
\[
   \set{\hbox{Certain totally real quintic number fields}}
   \map
   \set{B^4/\Gamma}
\]
The period matrices that arise in this way are rational over $K$. Indeed, consider
the formula $(\ref{eq:ZFORMULA})$ for the normalized period
matrix $Z = (Z_{ij})$, where $i$ and $j$ run from 0 to 4.
Then the period vector is
\[
    b = (1/\theta)(Z_{01}, Z_{02}, Z_{03}, Z_{04})
\]
If $Z_{ij}$ is rational over $K$ then so is $b$.  The converse also comes
from formula $(\ref{eq:ZFORMULA})$.  The period vector
could be rational over $K_0$ and still correspond to a 
period matrix rational over $K$; it cannot, however, be rational over 
$\QQ$ or $\QQ(\omega)$.

\section{Computations and experiments}

Above we described a method to show that the totally real quintic
field of discriminant 14641
defines a principally polarized abelian variety with three-fold symmetry
of the correct type.  The same method can be used to produce lists
of quintic fields with this property.  Using the Sage code below,
for example, we show that (1) there are 414 totally real quintic fields
of discriminant less than $10^6$; (2) none of these have class number bigger
than 1; (3) there are 412 fields which satisfy the hypotheses
of Proposition \ref{prop:criterionbeta} (4) the density of such fields in the indicated range
is about 0.995.  Below is the data for the fields of discriminant $< 10^5$.
The second column is the discriminant of the field.  The third column 
answers the question: \emph{Is there a cubic surface with the given discriminant?}
\begin{verbatim}
        discr   	result
    1   14641    True
    2   24217    True
    3   36497    True
    4   38569    True
    5   65657    True
    6   70601    True
    7   81509    True
    8   81589    True
    9   89417    True
\end{verbatim}

\noindent
{\bf Sage code.}  We first define a function to return the class number of the field
$K_0 = \QQ[X]/(F[X])$ for a polynomial $F$:
\begin{verbatim}
   def classNumber(F):
     R.<x> = PolynomialRing(QQ)
     f = R(F)
     K.<a> = NumberField(f)
     return K.class_number()
\end{verbatim}
\noindent
Next, we define the function {\tt test(F)}.  It returns
{\tt True} if and only if there is an $\epsilon$-positive
generator of the different of $K_0$ for some $\epsilon$ with a single $-1$.  
In paragraph one of the code,
a generator {\tt d0} of the different  and generators {\tt u[0], ..., u[3]}
for the free part of the group of units are found for $K_0$.

In the second paragraph, a search is conducted over a the set
$\set{0,1}^5$ in $i,j,k,\ell, m$ space.  For each element of the set,
we ask whether $d = (-1)^id_0 u_0^j  u_1^k  u_2^\ell  u_3^m $ is 
$\epsilon$-positive. If a lattice point passes the test, the function
{\tt test} returns {\tt True}.  If no lattice point passes the test,  it returns {\tt False}.  
 The validity of the test rests on the fact that 
 the positivity properties of  $\tau_r(d)$ depend only on the exponents 
 $j,k,\ell, m$ modulo 2.

\begin{verbatim}
  def test(F):
    R.<x> = PolynomialRing(QQ)
    f = R(F)
    K0.<a> = NumberField(f);
    D0 = K0.different(); d0 = D0.gens_reduced()[0]
    u = K0.units()
  
    i, j, k, l = 0, 0, 0, 0
    count = 0
    for i in range(0,2):
      for j in range(0,2):
        for k in range(0,2):
          for l in range(0,2):
            for m in range(0,2):
              sign = (-1)^i
              dd = sign*d0*u[0]^j*u[1]^k*u[2]^l*u[3]^m
              if epsilon_positive(dd) == True:
                return True
    return False
\end{verbatim}

\noindent
The test for $\epsilon$-positivity is carried out by the function below:

\begin{verbatim}
  def  epsilon_positive(x):
    pos_minus_neg = sum( sgn(tx) for tx in x.complex_embeddings() )
    return ( pos_minus_neg == 3 )
\end{verbatim}

\noindent
Finally, we enumerate the totally real quintic fields of discriminant
less than N, applying the above test to each, and collecting
various statistics.
\goodbreak
\begin{verbatim}
  def testFields(N):
  
    TRF = enumerate_totallyreal_fields_all(5, N)
    print "Number of totally real fields:", len(TRF)
  
    n = 1
    nNonUFD = 0
    nPass = 0
    ratio = 0;
  
    for field in TRF:
      discr, G = field
      cn = classNumber(G)
      result = False
      if (cn == 1):
        result = test(G)
        if (result):
          nPass = nPass + 1
      else:
        nNonUFD = nNonUFD + 1
      ratio = nPass/(1.0*n)
      print "%5d %4d %s" % (n, discr, result)
      n = n + 1
  
    print "Summary:"
    print "  Number of fields:", len(TRF)
    print "  Number of fields of class number > 1:", nNonUFD
    print "  Number of fields which satisfy the criterion:", nPass
    print "  Ratio:", ratio
\end{verbatim}

\noindent
To run the test on fields of discriminant $< 10^6$,
run the command {\tt testFields(10\^{}§6)}.

\section{Problems}

As promised in the introduction, we list some problems suggested by the analogy between the period map $\PPP$ for cubic surfaces and the classical period map for cubic curves.

\begin{itemize}
\renewcommand{\labelitemi}{$\circ$}
\parskip5pt
\item {\it  Find explicit values of the period map}.  This can be interpreted in two ways:  the actual period vector  $\PPP(F,\gamma)\in \CC^{1,4}$ as in (\ref{subsec;explicitperiod}), or as a period ratio, namely the equivalence class of $\PPP(F,\gamma)$ in $\BB^4$, or in  $\Gamma\backslash\BB^4$.  We remark that nothing seems to be known about the periods $\PPP(F,\gamma)$.  The known period ratios from \S 11 of \cite{ACT},   (\ref{eq:periodfermat}) and (\ref{eq:periodclebsch}) above, are found by symmetry considerations.   The Fermat and Clebsch surfaces are uniquely determined by their automorphism group \emph{and} this group is generated by reflections.   This suggests an  easier sub-problem:

\item {\it Find values of the period map for all surfaces with symmetries}  These surfaces are totally classified.  One special family of such surfaces that should be more tractable is the following:

\item{\it Find the values of the period map for all cyclic cubic surfaces.  Express them as explicit functions of the periods of the corresponding cubic curves.}  A cyclic cubic surface means a surface $S\subset\PP^3$ given by an equation $x_3^3 = f(x_0,x_1,x_2)$ where $f(x_0,x_1,x_2)$ is a non-singular cubic form, in other words, $S$ is the cyclic cover of $\PP^2$ branched along the cubic curve $f=0$. 

\item {\it Study special values of the period map}.  We mean the situation when the abelian variety has complex multiplication.

\item {\it Find an explicit inverse to the period map}.   We mean: given $b\in\BB^4-\HHH$, find an explicit cubic form $F$ so that the class of $\PPP(F,\gamma)$ is $b$.   In principle this can be done using suitable theta functions.   Allcock and Freitag \cite{AF} and Matsumoto and Terasoma \cite{matsumoto} have given projective embeddings of $\Gamma\backslash\BB^4$ with image an algebraic embedding of (a covering of)  $\MMM_{st}$.  In this way one obtains formulas for algebraic invariants of $F$ in terms of $b$. but we are not aware of explicit formulas for a cubic form.

\item {\it Study the algebraic and arithmetic nature of the period map}.  Both the domain and target of $\PPP:\MMM_{st}\map\Gamma\backslash\BB^4$ are algebraic varieties and $\PPP$, in spite of its transcendental definition, is an algebraic map.   See \cite{Achter} for more information and for progress on this question.

\end{itemize}

\vskip0.30truein
\obeylines
\parskip=0pt
James A. Carlson: jcarlson AT claymath.org
Clay Mathematics Institute
\bigskip
Domingo Toledo: toledo AT math.utah.edu
Department of Mathematics
University of Utah


\begin{thebibliography}{99}

\bibitem{Achter}  J. Achter, Arithmetic Torelli maps for cubic surfaces and threefolds, arXiv:1005.2131v2 [math.AG],  August 8, 2011.

\bibitem{ACT} D. Allcock, J. Carlson, and D. Toledo, The complex hyperbolic geometry of the moduli space of cubic surfaces {\sl J. Algebraic Geom}. 11 (2002), 659-724.  (math.AG/0007048)

\bibitem{AF} D. Allcock and E. Freitag, Cubic surfaces and Borcherds products, 
Comment. Math. Helv. 77 (2002),  270Ð296.


\bibitem{Beau} A. Beauville, Moduli of cubic surfaces and Hodge theory (after Allcock, Carlson, Toledo),  G\'eometries a corboure n\'egtive ou nulle, S\'emin. Congr. {\bf 18}, Soc. Mat. France, Paris, 2009, 445--466.


\bibitem{Lang:CM} S. Lang, Complex Multiplication (Grundlehren der mathematischen Wissenschaften 255), Springer-Verlag 1983, pp. 185.

\bibitem{matsumoto} K. Matsumoto and T. Terasoma, Theta constants associated to cubic threefolds. J. Algebraic Geom. 12 (2003),  741Ð775.

\bibitem{Mumford:Abelian} D. Mumford, {\sl Abelian Varieties}, Oxford University Press, 1970, pp. 279.

\bibitem{Mumford:Shimura} D. Mumford, A Note on Shimura's Paper 
"Discontinuous Groups and Abelian  Varieties,  Math. Ann. 181, 345--351 (1969)

\bibitem{Neukirch} J\"urgen Neukirch, {\sl Algebraic Number Theory}, Springer-Verlag (Grundlehren der mathematischen Wissenschaften  322), 199, pp 571.

\bibitem{Waldschmitt} M. Waldschmitt, Transcendence of Periods: The State of the Art, {\sl Pure and Applied Mathematics Quarterly} Volume 2, Number 2 (Special Issue: In honor of John H. Coates, Part 2 of 2)
435--463, 2006

\end{thebibliography}
\end{document}